\newtheorem{thm}{Theorem}[section]
\newtheorem{lem}[thm]{Lemma}
\newtheorem{prp}[thm]{Proposition}
\theoremstyle{definition}
\newcommand{\scr}[1]{\mathscr #1}
\definecolor{wco}{rgb}{0.5,0.2,0.3}
\numberwithin{equation}{section}
\newcommand{\ua}{\uparrow}
\date{}
\begin{document}

\def\R{\mathbb R} \def\Z{\mathbb Z} \def\ff{\frac} \def\ss{\sqrt}
\def\dd{\delta} \def\DD{\Delta} \def\vv{\varepsilon} \def\rr{\rho}
\def\<{\langle} \def\>{\rangle} \def\GG{\Gamma} \def\gg{\gamma}
\def\ll{\lambda} \def\LL{\Lambda} \def\nn{\nabla} \def\pp{\partial}
\def\d{\text{\rm{d}}} \def\loc{\text{\rm{loc}}} \def\bb{\beta} \def\aa{\alpha} \def\D{\scr D}
\def\E{\mathbb E}
\newcommand{\Ex}{{\bf E}}
\def\si{\sigma} \def\ess{\text{\rm{ess}}}
\def\beg{\begin} \def\beq{\beg}  \def\F{\scr F}
\def\Ric{\text{\rm{Ric}}} \def\Hess{\text{\rm{Hess}}}\def\B{\scr B}
\def\e{\text{\rm{e}}} \def\ua{\underline a} \def\OO{\Omega} \def\b{\mathbf b}
\def\oo{\omega}     \def\tt{\tilde} \def\Ric{\text{\rm{Ric}}}
\def\cut{\text{\rm{cut}}} \def\P{\mathbb P} \def\ifn{I_n(f^{\bigotimes n})}
\def\fff{f(x_1)\dots f(x_n)} \def\ifm{I_m(g^{\bigotimes m})} \def\ee{\varepsilon}
\def\C{\scr C}
\def\M{\scr M}\def\ll{\lambda}
\def\X{\scr X}
\def\T{\scr T}
\def\A{\scr A}
\def\LL{\scr L}
\def\gap{\mathbf{gap}}
\def\div{\text{\rm div}}
\def\dist{\text{\rm dist}}
\def\cut{\text{\rm cut}}
\def\supp{\text{\rm supp}}
\def\Var{\text{\rm Var}}
\def\p{\text{\rm p}}
\def\Cov{\text{\rm Cov}}
\def\Cut{\text{\rm Cut}}
\def\coth{\text{\rm coth}}
\def\Dom{\text{\rm Dom}}
\def\Cap{\text{\rm Cap}}
\def\Ent{\text{\rm Ent}}
\def\sect{\text{\rm sect}}\def\H{\mathbb H}

\bf STOCHASTIC EULER-POINCAR\'E REDUCTION

\rm

\vskip 10mm
MARC ARNAUDON, XIN CHEN AND ANA BELA CRUZEIRO

\vskip 15mm

Abstract

We prove a Euler-Poincar\'e reduction theorem for stochastic processes taking values in a
Lie group and we show examples of its application to $SO(3)$ and to the group of diffeomorphisms.

\section{Introduction}

Two methods of stochastic perturbation of Geometric Mechanics seem to be known today.
In one of them, inspired by J. M. Bismut [B] and  developed by J.P. Ortega  and collaborators
[LC-O],  the Lagrangian of the system is randomly perturbed. We shall advocate here the other approach,
sometimes known as  "stochastic deformation", where the Lagrangian is essentially the classical one but is evaluated on the underlying stochastic process and its mean derivative. This perspective was initially motivated by the quantization of classical systems ([C-Z], [Y 1] , [Z]) and a probabilistic version of Feynman's path integral approach. More recently ([C-C], [A-C]) the Navier-Stokes equation was derived as solution of a stochastic variational principle for this kind of Lagrangian.

We establish here a stochastic Euler-Poincar\'e reduction theorem on a general Lie group.
We describe some applications including the stochastic Lagrangian flows intrinsically associated with the Navier-Stokes equation (without external noise). In this stochastic variational program, indeed, this equation coincides with the stochastic deformation of Arnold's picture of the Euler flow as a geodesic on the Lie group of diffeomorphisms ([A]).

\section{Semimartingales in a general Lie group}

In [C-C] and [A-C],  also  inspired by [Y2], it was shown that the Navier-Stokes equation (in the two-dimensional torus and in a compact Riemannian manifold without boundary, respectively) can be viewed as the drift part of a semi-martingale which is
a critical point of the functional whose Lagrangian is given by the kinetic energy expressed via the generalized derivative.

The stochastic variational principle is formulated in the space of volume-preserving homeomorphisms, which is a (infinite dimensional) Lie group
endowed with a right-invariant metric. It extends to the Navier-Stokes equation Arnold's variational principle for the Euler equation.

In the present work we consider a stochastic variational principle  in a  general Lie group $G$, endowed with a left-invariant
(or right-invariant) metric.

The domain of our action functionals will be a set of semimartingales. As they are not
of bounded variation with respect to time, we can not use a classical derivative in time,
but will replace it by a generalized mean derivative $\frac{D}{dt}$.

In a Euclidean setting the definition of the generalized derivative (in time) corresponds
to a derivative regularized by a conditional expectation with respect to the filtration
at each time. More precisely, given by
semi-martingale $\xi(.)$ with respect to an increasing filtration $\scr{F}_{.}$ and taking
 values in
the Euclidean space (or torus)  the generalized derivative is defined by,
\begin{equation}\label{e1}
\frac{D\xi(t)}{dt}:=\lim_{\varepsilon \downarrow 0}
\E\Big[\frac{\xi(t+\varepsilon)-\xi(t)}{\varepsilon}|\scr{F}_t\Big]
\end{equation}
In particular, as the conditional expectation of the martingale part vanishes, the generalized derivative coincides with
the derivative of the bounded variation part of the semimartingale  (its drift).

 In a general manifold $M$
a martingale can only be defined after fixing a connection $\nabla$ (see [E], [I-W]). More precisely, a $M$-valued semi-martingale $\xi(.)$ is a
$\nabla$-martingale, if for each $f\in C^{\infty}(M)$,
\begin{equation*}
M^f_t:=f(\xi(t))-f(\xi(0))-\frac{1}{2}\int_0^t \text{Hess} f(\xi(s))\big(d\xi(s), d\xi(s)\big)
\end{equation*}
is a $\mathbb{R}^1$-valued local martingale with respect to the filtration
$\scr{F}_{.}$, where $\text{Hess} f(x): T_x M \times T_x M \rightarrow \mathbb{R}$ is defined by
\begin{equation}\label{e0}
\text{Hess} f(x)\Big(A_1,A_2\Big):=\tilde A_1 \tilde A_2 f -\nabla_{\tilde A_1}\tilde A_2 f, \ \forall A_1, A_2 \in T_x M,
\end{equation}
the  vector fields $\tilde A_j, \ j=1,2$  on $M$ being smooth and such that $\tilde A_i(x)=A_i$.

When $M$ is a finite dimensional manifold $\text{Hess}f=\nabla  df$ is the covariant derivative
of the (differential) tensor field $df$ by the connection $\nabla$. For an infinite dimensional Lie group  the tensor field $df$ or  $\nabla df$ does not always exist due to divergence of infinite series, but the
definition (\ref{e0})  is valid at least for smooth cylinder functions $f$. This is why we use here definition
(\ref{e0}).

So for a $M$-valued semi-martingale $\xi(.)$  it is natural to extend the definition of (\ref{e1})
to a $\nabla$-generalized derivative as follows. If for each $f\in C^{\infty}(M)$,
\begin{equation*}
N^f_t:=f(\xi(t))-f(\xi(0))-\frac{1}{2}\int_0^t \text{Hess} f(\xi(s))\big(d\xi(s), d\xi(s)\big)-
\int_0^t A(s)f (\xi(s)) ds
\end{equation*}
is a $\mathbb{R}^1$-valued local martingale, where (random) $A(t) \in T_{\xi(t)} M$ a.s., then
we define
\begin{equation}\label{e2}
\frac{D^{\nabla}\xi(t)}{dt}:=A(t)
\end{equation}

In fact, if $M$ is a finite dimensional manifold with a connection $\nabla$, there is an equivalent definition to
(\ref{e2}).  For simplicity, we assume $\xi(.)$ is a $M$-valued semi-martingale with fixed initial
point $\xi(0)=x$, there is a stochastic parallel translation $//_{.}: T_{x}M
$ $\rightarrow T_{\xi(.)}M$ along $\xi(.)$ by the connection $\nabla$ such that  $\nabla_{\circ d\xi(t)}(//_{t}v)=0$ for any
$v \in T_{x}M$.  Then $\eta(t):=\int_0^t //^{-1}_{s} \circ d\xi(s)$ is a $T_x M$ valued semimartingale. As in (\ref{e1}), we take the
derivative of bounded variation part as follows,
$$\frac{D\eta(t)}{dt}:=\lim_{\varepsilon \downarrow 0}
\E\Big[\frac{\eta(t+\varepsilon)-\eta(t)}{\varepsilon}|\scr{F}_t\Big],$$
which is a $T_x M$ valued process. Then we define
\begin{equation*}
\frac{D^{\nabla}\xi(t)}{dt}:=//_{t}\frac{D\eta(t)}{dt}.
\end{equation*}
This definition is the same as (\ref{e2}), see [E].

From now on  $G$ will denote a Lie group endowed with a left invariant metric $\langle\ \rangle$ and a left invariant connection $\nabla$.
Unless explicitely stated,
 $\nabla$ is a general connection, not necessarily the Levi-Civita connection with respect to $\langle\ \rangle$.
From now on, we let $\scr{G}:=T_{e}G$, here $e$ is the unit element of $G$, in particular, $T_e G$ can be identified with
the Lie algebra of $G$.

 Taking a sequence of vectors $H_i \in \scr{G}$, $i=1,2,..,k$, and
a non-random map $u(.) \in C^1 ([0,1];\scr{G})$, consider the following Stratonovich SDE in the group $G$,
\begin{equation}\label{e3}
dg(t)=T_{e}L_{g(t)}\Big(\sum_{i}H_i\circ dW^i_t-\frac{1}{2}\sum_{i}\nabla_{H_i}H_idt + u(t)dt\Big),
...,g(0)=e
\end{equation}
where $T_{a}L_{g(t)}:T_a G \rightarrow T_{g(t)a}G$ is the differential of the left translation $L_{g(t)}(x):=g(t)x$,
$\forall x \in G$ at the point $x=a \in G$, and $W_t$ is a $\mathbb{R}^k$ valued Brownian motion.

By It\^o formula and definition (\ref{e2}) we can see that
$$\frac{D^{\nabla}g(t)}{dt}=T_{e}L_{g(t)}u(t).$$

In fact the term $\frac{1}{2}\sum_{i}\nabla_{H_i}H_i$ corresponds to the  contraction term which
is the difference between the It\^o and the Stratonovich stochastic integral.

In particular, if $\{H_i\}$ is an orthonormal basis of $\scr{G}$, $\nabla$ is the Levi-Civita
connection, $u(t)=0$ for each $t$, and $\nabla_{H_i}H_i=0$ for each $i$, then $g(.)$ is the Brownian motion on $G$ whose generator is
the Laplace-Beltrami operator.

Note that if $H_i=0$ for each $i$, then $\frac{D^{\nabla}g(t)}{dt}$ is the ordinary derivative with
$t$, which does depend on the connection $\nabla$.

\vskip 5mm

\bf Remark: \rm  By the standard theory, the SDE (\ref{e3})  can only be defined on the finite dimensional Lie group. But in some special cases of
infinite dimensional Lie groups, for example,  the group of diffeomorphism on torus, SDE (\ref{e3})   still defines a semi-martingale even when we
take an infinite number of $H_i$, see the discussion in [A-C],[C],[C-C]. So, from now on, $G$ can be viewed as finite or infinite dimensional
Lie group as long as a solution of (\ref{e3})  exists (notably in the case of the group of diffeomorphisms on a manifold).

\section{The stochastic Euler-Poincar\'e reduction theorem in a  Lie group}

Let $\scr{S}(G)$ be the collection of all the $G$-valued semi-martingales with coefficients (both diffusion and drift) in $C^1 ([0,1];\scr{G})$. Define a stochastic action  functional $J^{\nabla,\langle\ \rangle}$ in
$\scr{S}(G)$ as following,
\begin{equation}\label{e4}
J^{\nabla,\langle\ \rangle}(\xi(.)):=\frac{1}{2}\mathbb{E}\Big[\int_0^1 \langle T_{\xi(t)}L_{\xi(t)^{-1}}\frac{D^{\nabla}\xi(t)}{dt},
T_{\xi(t)}L_{\xi(t)^{-1}}\frac{D^{\nabla}\xi(t)}{dt} \rangle dt\Big], \ \ \forall \xi(.) \in \scr{S}(G).
\end{equation}
Notice that here $T_{\xi(t)}L_{\xi(t)^{-1}}\frac{D^{\nabla}\xi(t)}{dt} \in \scr{G}$ for each $t$.

The  Lagrangian of this action functional  corresponds to the (generalized) kinetic energy. One can easily extend our
results to more general Lagrangians,
$$ L =L(\xi (.), T_{\xi(t)}L_{\xi(t)^{-1}}\frac{D^{\nabla}\xi(t)}{dt})$$

For each $v(.)\in C^1 ([0,1];\scr{G})$ satisfying
$v(0)=v(1)=0$, let $e_{\varepsilon,v}(.) \in C^1 ([0,1];G)$ be the flow generated by
$\varepsilon v(.)$ in $G$, such that
\begin{equation*}
\begin{cases}
&\frac{d}{dt}e_{\varepsilon,v}(t)=\varepsilon T_{e}L_{e_{\varepsilon,v}(t)}\dot{v}(t),\\
&e_{\varepsilon,v}(0)=e,
\end{cases}
\end{equation*}

We say a $G$-valued semi-martingale $g(.)$ is a critical point of the action functional $J^{\nabla,\langle\ \rangle}$ if
for any $v(.)\in C^1 ([0,1];\scr{G})$ satisfying
$v(0)=v(1)=0$,
\begin{equation}\label{e5}
\frac{dJ^{\nabla,\langle\ \rangle}(g_{\varepsilon,v}(.))}{d\varepsilon}\Big|_{\varepsilon=0}=0,
\end{equation}
where $g_{\varepsilon,v}(t):=g(t)e_{\varepsilon,v}(t), \ t\in [0,1]$.

For the variation of $e_{\varepsilon,v}$  the following lemma holds,
\begin{lem}\label{l1}
We have,
\begin{equation*}
\frac{d}{d\varepsilon}e_{\varepsilon,v}(t)\big|_{\varepsilon=0}=v(t)
\end{equation*}
\begin{equation*}
\frac{d}{d\varepsilon}e_{\varepsilon,v}^{-1}(t)\big|_{\varepsilon=0}=-v(t)
\end{equation*}
\end{lem}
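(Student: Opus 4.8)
The plan is to differentiate the defining ODE for $e_{\varepsilon,v}$ with respect to $\varepsilon$ at $\varepsilon=0$, exploiting the fact that the reference curve is constant. First I would observe that at $\varepsilon=0$ the ODE reads $\frac{d}{dt}e_{0,v}(t)=0$ with $e_{0,v}(0)=e$, so that $e_{0,v}(t)=e$ for all $t\in[0,1]$. Consequently the curve $\varepsilon\mapsto e_{\varepsilon,v}(t)$ passes through the identity at $\varepsilon=0$, and its $\varepsilon$-derivative there, $\phi(t):=\frac{d}{d\varepsilon}e_{\varepsilon,v}(t)\big|_{\varepsilon=0}$, is a well-defined element of $\scr{G}=T_eG$ depending on $t$. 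To make the subsequent manipulations honest vector-space operations I would work in a fixed chart centered at $e$ (whose differential at $e$ is the identity on $\scr{G}$); for small $\varepsilon$ the whole curve $t\mapsto e_{\varepsilon,v}(t)$ stays in this chart, and smooth dependence of the flow on the parameter $\varepsilon$ permits interchanging $\partial_\varepsilon$ and $\partial_t$.

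Next I would differentiate the ODE in $\varepsilon$ and set $\varepsilon=0$. The left-hand side yields $\frac{d}{dt}\phi(t)$. On the right-hand side, the explicit factor $\varepsilon$ in $\varepsilon\,T_eL_{e_{\varepsilon,v}(t)}\dot v(t)$ means that only the zeroth-order term of $T_eL_{e_{\varepsilon,v}(t)}\dot v(t)$ survives after setting $\varepsilon=0$; since $e_{0,v}(t)=e$ and $L_e=\mathrm{id}_G$ gives $T_eL_e=\mathrm{id}_{\scr{G}}$, this term is simply $\dot v(t)$. Hence $\frac{d}{dt}\phi(t)=\dot v(t)$. The initial condition $e_{\varepsilon,v}(0)=e$ for every $\varepsilon$ forces $\phi(0)=0$, and $v(0)=0$ by hypothesis, so integrating from $0$ to $t$ gives $\phi(t)=v(t)$, which is the first identity.

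For the second identity I would differentiate the relation $e_{\varepsilon,v}(t)\,e_{\varepsilon,v}(t)^{-1}=e$ with respect to $\varepsilon$ at $\varepsilon=0$. Because both factors equal $e$ at $\varepsilon=0$, the differential of group multiplication at $(e,e)$ acts as $(X,Y)\mapsto X+Y$, so the product rule gives $\phi(t)+\frac{d}{d\varepsilon}e_{\varepsilon,v}(t)^{-1}\big|_{\varepsilon=0}=0$; combined with $\phi(t)=v(t)$ this yields $\frac{d}{d\varepsilon}e_{\varepsilon,v}(t)^{-1}\big|_{\varepsilon=0}=-v(t)$. Equivalently, one can simply invoke that the inversion map has differential $-\mathrm{id}$ at the identity. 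The computation is otherwise routine; the only step requiring genuine care is the justification of smooth dependence on $\varepsilon$ and the interchange of the two derivatives, which is immediate from the standard parameter-dependence theorem for ODEs in the finite-dimensional case and which I would treat at the same formal level as the rest of the paper in the infinite-dimensional setting of the diffeomorphism group.
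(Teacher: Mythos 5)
Your proposal is correct and follows essentially the same route as the paper: differentiate the defining ODE in $\varepsilon$ at $\varepsilon=0$, use that $e_{0,v}(t)\equiv e$ and $v(0)=0$ to integrate $\frac{d}{dt}\phi=\dot v$, and then differentiate $e_{\varepsilon,v}(t)e_{\varepsilon,v}(t)^{-1}=e$ for the second identity. The only cosmetic difference is that the paper justifies the interchange of the $t$- and $\varepsilon$-derivatives via the symmetry of the Levi-Civita covariant derivative rather than via a chart at $e$ and smooth parameter dependence; both devices yield the same equation.
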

\begin{proof}
In the proof, we omit the index $v$ in $e_{\varepsilon,v}$ for simplicity.
If $\frac{\hat{D}}{dt}$ denotes the covariant derivative on $G$ via the Levi-Civita connection, then
\begin{equation*}
\begin{split}
&\frac{\hat{D}}{dt}\frac{d}{d \varepsilon}e_{\varepsilon}(t)=
\frac{\hat{D}}{d\varepsilon}\frac{d}{dt}e_{\varepsilon}(t)=
\frac{\hat{D}}{d\varepsilon}\big(\varepsilon
T_{e}L_{e_{\varepsilon}(t)}\dot{v}(t)\big)=T_{e}L_{e_{\varepsilon}(t)}\dot{v}(t)+
\varepsilon \frac{\hat{D}}{d\varepsilon}\big(T_{e}L_{e_{\varepsilon}(t)}\dot{v}(t)\big)
\end{split}
\end{equation*}
Let $X(t):=\frac{d}{d\varepsilon}e_{\varepsilon}(t)\big|_{\varepsilon=0}$; taking $\varepsilon=0$ above, and noting that $e_{0}(t)=e$ for each $t$, we derive,
\begin{equation*}
\frac{d}{dt}X(t)=\dot{v}(t),
\end{equation*}
Then, as $v(0)=0$, we get $X(t)=v(t)$.

Since $e_{\varepsilon}(t)e_{\varepsilon}^{-1}(t)=e$ for each $\varepsilon$, differentiating with respect to $\varepsilon$, we get $\frac{d}{d\varepsilon}e_{\varepsilon}^{-1}(t)$
$=-T_{e}R_{e_{\varepsilon}^{-1}(t)}$ $T_{e_{\varepsilon}(t)}L_{e_{\varepsilon}^{-1}(t)}\frac{d e_{\varepsilon}(t)}{d\varepsilon}$,
where $TR$ is the differential of right translation. Hence we have,
\begin{equation*}
\frac{d}{d\varepsilon}e_{\varepsilon}^{-1}(t)\big|_{\varepsilon=0}=-v(t)
\end{equation*}
\end{proof}

Now we present our main result, a sufficient and necessary condition for the critical points of
$J^{\nabla,\langle\ \rangle}$.
\begin{thm}\label{t1}
Suppose that $G$ is a Lie group with left invariant metric $\langle\ \rangle$ and left invariant
connection $\nabla$. The $G$-valued semi-martingale $g(.)$ defined by (\ref{e3}) is a critical point of
$J^{\nabla,\langle\ \rangle}$ if and only if the non-random $u(.) \in C^1([0,1];\scr{G})$ satisfies the following equation,
\begin{equation}\label{e6}
\frac{d}{dt}u(t)=ad^*_{\tilde u(t)} u(t)+K(u(t)),
\end{equation}
where
\begin{equation}\label{e7a}
 \tilde u(t):= u(t)-\frac{1}{2} \sum_{i} \nabla_{H_i} H_i,
\end{equation}
for each $u \in \scr{G}$, $ad^{*}_{u}:\scr{G}$ $\rightarrow \scr{G}$ is the adjoint of $ad_{u}:\scr{G}$
$\rightarrow \scr{G}$ with respect to the metric $\langle\ \rangle$,
\begin{equation}\label{e7}
\langle ad^{*}_{u}v, \ w \rangle=\langle v,\ ad_{u}w\rangle \ \ \ \forall u,v,w \in \scr{G},
\end{equation}
and the operator $K:\scr{G}\rightarrow \scr{G}$ is defined as following,
\begin{equation}\label{e8}
\langle K(u),v \rangle=-\Big\langle u,\frac{1}{2}\sum_{i}\big(\nabla_{ad_{v}H_i}H_i+
\nabla_{H_i}(ad_{v}H_i)\big)\Big\rangle,\ \ \forall u,v \in \scr{G}.
\end{equation}
\end{thm}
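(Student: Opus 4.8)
The plan is to collapse the stochastic variational problem to a deterministic one by computing the left-trivialized (body) velocity of the perturbed process explicitly. First I would determine the equation satisfied by $g_{\varepsilon,v}(t)=g(t)e_{\varepsilon,v}(t)$. Since $e_{\varepsilon,v}(\cdot)$ is of bounded variation in $t$, the Leibniz rule for the group multiplication gives $dg_{\varepsilon,v}=T_gR_{e_{\varepsilon,v}}\,dg+T_{e_{\varepsilon,v}}L_g\,de_{\varepsilon,v}$, where $TR$ is the differential of right translation. Applying $T_{g_{\varepsilon,v}}L_{g_{\varepsilon,v}^{-1}}$ to bring everything to $\scr{G}$, and using that left and right translations commute together with the definition of the adjoint representation, the first term becomes $\text{Ad}_{e_{\varepsilon,v}^{-1}}\big(\sum_iH_i\circ dW^i-\frac12\sum_i\nabla_{H_i}H_i\,dt+u\,dt\big)$ while the second term becomes $\varepsilon\dot v\,dt$. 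Thus, in body coordinates, $g_{\varepsilon,v}$ is driven by the time-dependent but deterministic fields $H_i^{\varepsilon}(t):=\text{Ad}_{e_{\varepsilon,v}^{-1}(t)}H_i$.

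Next I would extract the body velocity $\omega_\varepsilon(t):=T_{g_{\varepsilon,v}}L_{g_{\varepsilon,v}^{-1}}\frac{D^{\nabla}g_{\varepsilon,v}}{dt}$. Running exactly the It\^o-formula computation that yields $\frac{D^{\nabla}g}{dt}=T_eL_gu$ for (\ref{e3}), the connection correction $\frac12\sum_i\nabla_{\sigma_i}\sigma_i$ reappears, so that $\omega_\varepsilon=\text{Ad}_{e_{\varepsilon,v}^{-1}}\tilde u+\varepsilon\dot v+\frac12\sum_i\nabla_{H_i^{\varepsilon}}H_i^{\varepsilon}$, with $\tilde u$ as in (\ref{e7a}). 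The crucial point is that $\omega_\varepsilon(t)$ is non-random, so the expectation in (\ref{e4}) is vacuous and $J^{\nabla,\langle\ \rangle}(g_{\varepsilon,v})=\frac12\int_0^1\langle\omega_\varepsilon,\omega_\varepsilon\rangle\,dt$ reduces to an ordinary integral; at $\varepsilon=0$ one recovers $\omega_0=u$.

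I would then differentiate at $\varepsilon=0$. By Lemma \ref{l1}, $\frac{d}{d\varepsilon}e_{\varepsilon,v}^{-1}|_{\varepsilon=0}=-v$, hence $\frac{d}{d\varepsilon}\text{Ad}_{e_{\varepsilon,v}^{-1}}|_{\varepsilon=0}=-ad_v$, giving $\delta\omega:=\frac{d}{d\varepsilon}\omega_\varepsilon|_{\varepsilon=0}=\dot v+ad_{\tilde u}v-\frac12\sum_i\big(\nabla_{ad_vH_i}H_i+\nabla_{H_i}(ad_vH_i)\big)$, so that $\frac{d}{d\varepsilon}J^{\nabla,\langle\ \rangle}(g_{\varepsilon,v})|_{\varepsilon=0}=\int_0^1\langle u,\delta\omega\rangle\,dt$. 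Integrating the term $\langle u,\dot v\rangle$ by parts (the boundary contributions vanish because $v(0)=v(1)=0$), rewriting $\langle u,ad_{\tilde u}v\rangle=\langle ad^*_{\tilde u}u,v\rangle$ via (\ref{e7}), and identifying the remaining terms with $\langle K(u),v\rangle$ by the very definition (\ref{e8}) of $K$, the derivative becomes $\int_0^1\langle-\dot u+ad^*_{\tilde u}u+K(u),\,v\rangle\,dt$. By the fundamental lemma of the calculus of variations this vanishes for every admissible $v$ if and only if $\frac{d}{dt}u=ad^*_{\tilde u}u+K(u)$, which is (\ref{e6}); since each step is an equivalence, both implications are obtained simultaneously.

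The step I expect to be the main obstacle is the careful bookkeeping in the first two paragraphs: correctly conjugating the left-invariant noise by the right multiplication by $e_{\varepsilon,v}$ to produce the factor $\text{Ad}_{e_{\varepsilon,v}^{-1}}$, and verifying that the explicit bounded-variation time dependence of the coefficients $H_i^{\varepsilon}(t)$ contributes nothing to the It\^o--Stratonovich correction, so that the connection term is precisely $\frac12\sum_i\nabla_{H_i^{\varepsilon}}H_i^{\varepsilon}$. Once the clean deterministic formula for $\omega_\varepsilon$ is established, the remaining variational computation is routine and the operators $ad^*$ and $K$ emerge directly from (\ref{e7}) and (\ref{e8}).
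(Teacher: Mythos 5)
Your proposal is correct and follows essentially the same route as the paper's proof: the It\^o/Leibniz computation for $g_{\varepsilon,v}=g\,e_{\varepsilon,v}$ yielding the $\mathrm{Ad}_{e_{\varepsilon,v}^{-1}}$-conjugated coefficients $H_i^{\varepsilon}$, the resulting expression for the left-trivialized drift, differentiation at $\varepsilon=0$ via Lemma \ref{l1}, and integration by parts to identify $ad^*_{\tilde u}$ and $K$. Your explicit remark that $\omega_\varepsilon$ is non-random (so the expectation in (\ref{e4}) is vacuous) is exactly the paper's Remark 2 made upfront, not a different argument.
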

\begin{proof}
In the proof, we omit the index $v$ in $e_{\varepsilon,v}(.)$ and $g_{\varepsilon,v}(.)$ for simplicity. As
$g_{\varepsilon}(t)=g(t)e_{\varepsilon}(t)$, by It\^o formula we get,
\begin{equation}\label{e8a}
\begin{split}
&dg_{\varepsilon}(t)=\sum_{i}T_{e}L_{g_{\varepsilon}(t)}H_i^{\varepsilon}(t)\circ dW^i_t
+T_{e}L_{g_{\varepsilon}(t)}\Big(Ad_{e_{\varepsilon}^{-1}(t)}\big(-\frac{1}{2}\nabla_{H_i}H_i+u(t)\big)\Big)dt\\
&+T_{e}L_{g_{\varepsilon}(t)}\big(T_{e_{\varepsilon}(t)}L_{e_{\varepsilon}^{-1}(t)}\dot{e}_{\varepsilon}(t)\big)dt,
\end{split}
\end{equation}
where $H_i^{\varepsilon}(t):=Ad_{e_{\varepsilon}^{-1}(t)}H_i$.
From the definition of $e_{\varepsilon}(t)$, we have
$T_{e_{\varepsilon}(t)}L_{e_{\varepsilon}^{-1}(t)}\dot{e}_{\varepsilon}(t)=\varepsilon \dot{v}(t)$. Then for each
$f \in C^{\infty}(G)$,
\begin{equation*}
\begin{split}
&N^f_t:=f(g_{\varepsilon}(t))-f(g_{\varepsilon}(0))-\frac{1}{2}\int_0^t
\text{Hess}f(g_{\varepsilon}(s))\big(dg_{\varepsilon}(s), dg_{\varepsilon}(s)\big)\\
&-\frac{1}{2}\sum_{i}
\int_0^t T_{e}L_{g_{\varepsilon}(s)}\big(\nabla_{H_i^{\varepsilon}(s)}H_i^{\varepsilon}(s)\big)f(g_{\varepsilon}(s))ds\\
&-\int_0^t T_{e}L_{g_{\varepsilon}(s)}\Big(Ad_{e_{\varepsilon}^{-1}(s)}\big(-\frac{1}{2}(\sum_i \nabla_{H_i}H_i)+u(s)\big)
+\varepsilon \dot{v}(s)\Big)f(g_{\varepsilon}(s))ds
\end{split}
\end{equation*}
is a local martingale.

By the definition of generalized derivative above, we get,
\begin{equation*}
\begin{split}
&T_{g_{\varepsilon}(t)}L_{g_{\varepsilon}^{-1}(t)}\frac{D^{\nabla}g_{\varepsilon}(t)}{dt}\\
&=\sum_i\frac{1}{2}\nabla_{H_i^{\varepsilon}(t)}H_i^{\varepsilon}(t)
+Ad_{e_{\varepsilon}^{-1}(t)}\big(-\frac{1}{2}(\sum_i\nabla_{H_i}H_i)+u(t)\big)+\varepsilon \dot{v}(t)
%&:=J_{\varepsilon,1}(t)+J_{\varepsilon,2}(t)+J_{\varepsilon,3}(t)
\end{split}
\end{equation*}
Using Lemma \ref{l1},
\begin{equation*}
\begin{split}
&\frac{d}{d\varepsilon}\Big(Ad_{e_{\varepsilon}^{-1}(t)}\big(-\frac{1}{2}(\sum_i\nabla_{H_i}H_i)+u(t)\big)\Big)
\Big|_{\varepsilon=0}\\
&=-ad_{v(t)}\big(-\frac{1}{2}(\sum_i\nabla_{H_i}H_i)+u(t)\big)=ad_{\big(-\frac{1}{2}(\sum_i\nabla_{H_i}H_i)+u(t)\big)}v(t)
\end{split}
\end{equation*}
Note that $H_i^{0}(t)=H_i,\ \forall t$ and by Lemma \ref{l1}, $\frac{dH_i^{\varepsilon}(t)}{d\varepsilon}\big|
_{\varepsilon=0}=-ad_{v(t)}H_i$. We obtain,
\begin{equation*}
%\begin{split}
\frac{d}{d\varepsilon}\nabla_{H_i^{\varepsilon}(t)}H_i^{\varepsilon}(t)\big|_{\varepsilon=0}
=-\nabla_{ad_{v(t)}H_i}H_i-\nabla_{H_i}(ad_{v(t)}H_i)
%\end{split}
\end{equation*}
Recall that  $T_{g(t)}L_{g(t)^{-1}}\frac{D^{\nabla}g(t)}{dt}=u(t)$. We derive,
\begin{equation}\label{e9}
\begin{split}
&\frac{dJ^{\nabla,\langle\ \rangle}(g_{\varepsilon}(.))}{d\varepsilon}\big|_{\varepsilon=0}=
\mathbb{E}\int_0^1 \big\langle \frac{d}{d\varepsilon}\big(T_{g_{\varepsilon}(t)}L_{g_{\varepsilon}^{-1}(t)}\frac{D^{\nabla}g_{\varepsilon}(t)}{dt}\big)
\big|_{\varepsilon=0}, u(t)\big\rangle dt\\
&=\int_0^1\big\langle u(t), \dot{v}(t)+ad_{\big(-\frac{1}{2}(\sum_i\nabla_{H_i}H_i)+u(t)\big)}v(t)-\frac{1}{2}
\sum_i \big(\nabla_{ad_{v(t)}H_i}H_i+\nabla_{H_i}(ad_{v(t)}H_i)\big)\big\rangle dt\\
&=\int_0^1 \big\langle -\dot{u}(t)+ad^*_{\tilde u(t)} u(t) +K(u(t)),v(t) \big\rangle dt
\end{split}
\end{equation}
where in the last step, we used the integration by parts with respect to time and the condition $v(0)=v(1)=0$. Definitions
(\ref{e7a}), (\ref{e7}) and (\ref{e8}) were also used.

By definition, $g(.)$ is a critical point of $J^{\nabla,\langle\ \rangle}$ if and only if
$\frac{dJ^{\nabla,\langle\ \rangle}(g_{\varepsilon,v}(.))}{d\varepsilon}\big|_{\varepsilon=0}=0$ for each $v \in C^1([0,1];\scr{G})$.
Then  (\ref{e9}) implies
 equation (\ref{e6}) since $v$ is arbitrary.
\end{proof}

\vskip 5mm

\bf Remark 1. \rm If $H_i=0$ or we choose a connection such that
$\nabla_u v=0$ for any $u,v \in \scr{G}$, then $K(u)=0$ and equation (\ref{e6}) is the standard Euler-Poincar\'e equation, see for example [A-K], [M-R].

\bf Remark 2. \rm As we can deduce from the computation,  for each $\varepsilon$, the expression $T_{g_{\varepsilon}(t)}L_{g_{\varepsilon}^{-1}(t)}\frac{D^{\nabla}g_{\varepsilon}(t)}{dt}$
is non-random and does not depend on the initial point $g(0)$.

\bf Remark 3. \rm The critical equation (\ref{e6}) depends on the metric, connection and the
choice of $\{H_i\}$. The term $K(u)$ defined by (\ref{e8}) depends on the metric,
the connection and the choice of $\{H_i\}$ whereas $ad^*$ depends on the metric
only.

\bf Remark 4. \rm If $G$ is the group of  diffeomorphisms on the  torus the SDE (\ref{e3})  becomes
equation (\ref{e15}) of next section. We can check that the It\^o formula (\ref{e8a}) holds by
direct computation. Then the proof of  Theorem \ref{t1} is still valid, and the conclusion is true
in this case.

\vskip 5mm

For a Lie group $G$ with a right invariant metric and right invariant connection, we can define a  composition map $\diamond$ by
$a \diamond b:=ba,\ \forall a,b \in G$. Then the original metric and connection  are left invariant under the composition
$\diamond$ and we can also define the semi-martingale $g(.)$, the action functional $J(g(.))$ and
the perturbed semi-martingales $g_{\varepsilon,v}$ by the composition $\diamond$. For example, one can   check that
the semi-martingale $g(.)$ in
(\ref{e3}) is changed to the following,
\begin{equation}\label{e10}
dg(t)=T_{e}R_{g(t)}\Big(\sum_{i}H_i\circ dB^i_t-\frac{1}{2}\sum_{i}\nabla_{H_i}H_idt + u(t)dt\Big),
\end{equation}
where $T_{e}R_{g(t)}$ is the differential of right translation with $g(t)$ at the point $x=e$.
And the action functional $J$ in (\ref{e4}) is defined by right translation if we use the composition
$\diamond$ on $G$. We also say $g(.)$ is a critical point if
$\frac{d}{d\varepsilon}\big(J^{\nabla,\langle\ \rangle}(g_{\varepsilon,v})\big)\big|_{\varepsilon=0}=0$
for each $v\in C^1([0,1];\scr{g})$ with $v(0)=v(1)=0$. By the same procedure as above, we can derive the following theorem
on a Lie group with right invariant metric and connection.

\begin{thm}\label{t2}
Suppose that $G$ is a Lie group with right invariant metric $\langle,\rangle$ and right invariant
connection $\nabla$. The $G$-valued semi-martingale $g(.)$ defined in (\ref{e10}) is a critical point of
$J^{\nabla,\langle\ \rangle}$ if and only if $u(.) \in C^1([0,1];\scr{G})$ satisfies the following equation,
\begin{equation}\label{e11}
\frac{d}{dt}u(t)=-ad^*_{\tilde u(t)} u(t)-K(u(t)),
\end{equation}
where $\tilde u$ and
$K:\scr{G}\rightarrow \scr{G}$ are defined in  (\ref{e7a}) and  (\ref{e8}) respectively.
\end{thm}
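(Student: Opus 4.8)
The plan is to deduce Theorem \ref{t2} from Theorem \ref{t1} by passing to the opposite group, exactly as the preceding discussion suggests. Write $G^{\diamond}:=(G,\diamond)$ with $a\diamond b:=ba$. Its left translation is $L^{\diamond}_{g}=R_{g}$, so the assumption that $\langle\ \rangle$ and $\nabla$ are right invariant on $G$ is literally the statement that they are left invariant on $G^{\diamond}$; thus $G^{\diamond}$ satisfies the hypotheses of Theorem \ref{t1}. Since $T_{e}R_{g(t)}=T_{e}L^{\diamond}_{g(t)}$ and the identity and inverses of $G^{\diamond}$ coincide with those of $G$, the defining SDE (\ref{e10}) is precisely (\ref{e3}) read in $G^{\diamond}$, and the action functional (\ref{e4}) together with the perturbations $g_{\varepsilon,v}=g\diamond e_{\varepsilon,v}$ carry over as their $\diamond$-counterparts. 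Hence $g(\cdot)$ is a critical point in the $\diamond$-sense if and only if it is a critical point in the sense of Theorem \ref{t1} for $G^{\diamond}$, and we may simply quote that theorem.

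Applying Theorem \ref{t1} to $G^{\diamond}$ gives $\frac{d}{dt}u(t)=(ad^{\diamond})^{*}_{\tilde u(t)}u(t)+K^{\diamond}(u(t))$, where $(ad^{\diamond})^{*}$ and $K^{\diamond}$ are the objects defined by (\ref{e7}) and (\ref{e8}) built from the Lie algebra of $G^{\diamond}$. The crux is to identify this structure. As a vector space the Lie algebra of $G^{\diamond}$ is still $\scr G=T_{e}G$, but its bracket is the opposite of the original one: left invariant fields on $G^{\diamond}$ are right invariant fields on $G$, and the bracket of right invariant fields is minus the bracket of the corresponding left invariant fields. Therefore $ad^{\diamond}_{w}=-ad_{w}$ for every $w\in\scr G$.

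Two consequences follow, both taken with respect to the unchanged metric $\langle\ \rangle$. First, dualizing in (\ref{e7}) yields $(ad^{\diamond})^{*}_{w}=-ad^{*}_{w}$, so $(ad^{\diamond})^{*}_{\tilde u}u=-ad^{*}_{\tilde u}u$. Second, the auxiliary vector $\tilde u$ of (\ref{e7a}) contains only the connection term $\frac{1}{2}\sum_{i}\nabla_{H_i}H_i$, which carries no bracket and is thus unchanged; while in (\ref{e8}) every occurrence of $ad_{v}H_i$ becomes $ad^{\diamond}_{v}H_i=-ad_{v}H_i$, whence $K^{\diamond}(u)=-K(u)$ by linearity. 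Substituting the two sign changes into the equation for $G^{\diamond}$ turns the $+$ signs of (\ref{e6}) into the $-$ signs and produces exactly (\ref{e11}).

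I expect the only real obstacle to be the careful bookkeeping of this single sign and its propagation: one must verify that reversing the product negates the bracket while genuinely preserving the metric (recall $ad^{*}$ depends on the metric alone) and the connection-only term inside $\tilde u$, so that the two flips in $(ad^{\diamond})^{*}$ and $K^{\diamond}$ combine correctly. As an alternative that avoids $G^{\diamond}$ altogether, one may repeat the computation of Theorem \ref{t1} line by line, replacing each left translation in the It\^o expansion (\ref{e8a}) by a right translation; there the reversed sign emerges instead from the adjoint factor differentiating to $+ad_{v(t)}$ rather than $-ad_{v(t)}$ (equivalently from $\frac{dH_i^{\varepsilon}}{d\varepsilon}\big|_{\varepsilon=0}=+ad_{v(t)}H_i$), leading to the same conclusion (\ref{e11}).
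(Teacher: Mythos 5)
Your proposal is correct and follows essentially the same route as the paper: the authors introduce the opposite composition $a\diamond b=ba$, note that the right invariant structures become left invariant for $\diamond$, and assert that Theorem \ref{t1} then yields Theorem \ref{t2} ``by the same procedure.'' Your contribution is simply to make explicit the sign bookkeeping ($ad^{\diamond}_{w}=-ad_{w}$, hence $(ad^{\diamond})^{*}=-ad^{*}$ and $K^{\diamond}=-K$, with $\tilde u$ unchanged) that the paper leaves implicit, and this is carried out correctly.
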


\vskip 5mm
Under some special conditions, the operator $K(u)$ defined by (\ref{e8}) coincides with the de Rham-Hodge operator on the Lie group. More precisely  we
have the following Proposition,
\begin{prp}
Suppose that $G$ is a Lie group with right invariant metric $\langle\ \rangle$, and $\nabla$ is the (right invariant)
Levi-Civita connection with respect to $\langle\  \rangle$. If we assume that
$\nabla_{H_i}H_i=0$ for each $i$,  we have,
\begin{equation*}
K(u)=-\frac{1}{2}\sum_{i}\big(\nabla_{H_i}\nabla_{H_i}u+R(u,H_i)H_i\big),\ \ \forall u \in \scr{G},
\end{equation*}
here $R$ is the Riemanian curvature tensor with respect to $\nabla$. In particular, if $\{H_i\}$ is an orthonormal basis of
$\scr{G}$, then $K(u)=-\frac{1}{2}\square u:= -\frac{1}{2}\big(\Delta u+\text{Ric}(u)\big)$, where
$\Delta u:=\Delta U(x)|_{x=e}$ for the right invariant vector fields $U(x):=T_e R_x u,\ \forall u \in \scr{G},\ x \in G$.
\end{prp}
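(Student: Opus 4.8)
The plan is to unwind the definition (\ref{e8}) of $K$ and match it, term by term in $i$, against the claimed Weitzenb\"ock-type expression, using only the structural identities of a one-sided-invariant Levi-Civita connection. Throughout I identify $a\in\scr{G}$ with its right-invariant extension and write $ad_v H_i=[v,H_i]$ for the Lie-algebra bracket, as in the proof of Theorem \ref{t1}. Since the metric is right invariant, $\langle a,b\rangle$ is constant along right-invariant fields, so Koszul's formula collapses to the algebraic identity
$$2\langle\nabla_a b,c\rangle=-\langle[a,b],c\rangle+\langle[a,c],b\rangle+\langle[b,c],a\rangle,$$
from which I read off metric compatibility $\langle\nabla_a b,c\rangle=-\langle b,\nabla_a c\rangle$ and torsion-freeness $\nabla_a b-\nabla_b a=-[a,b]$ (the sign reflecting that right-invariant fields form an anti-homomorphism). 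I also record the reformulation of the hypothesis: by the same formula, $\nabla_{H_i}H_i=0$ is equivalent to $\langle[H_i,Z],H_i\rangle=0$ for every $Z\in\scr{G}$.

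First I would reduce the target. Writing $R(u,H_i)H_i=\nabla_u\nabla_{H_i}H_i-\nabla_{H_i}\nabla_u H_i+\nabla_{[u,H_i]}H_i$ and inserting both $\nabla_{H_i}H_i=0$ and $\nabla_u H_i=\nabla_{H_i}u-[u,H_i]$ (torsion-freeness), I obtain
$$\nabla_{H_i}\nabla_{H_i}u+R(u,H_i)H_i=\nabla_{H_i}[u,H_i]+\nabla_{[u,H_i]}H_i.$$
In view of (\ref{e8}) the proposition is therefore equivalent to the term-wise identity
$$\big\langle u,\ \nabla_{[v,H_i]}H_i+\nabla_{H_i}[v,H_i]\big\rangle=\big\langle \nabla_{H_i}[u,H_i]+\nabla_{[u,H_i]}H_i,\ v\big\rangle,$$
which is what I would prove next.

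The computation I would carry out is to expand every covariant derivative on both sides by the Koszul formula above. On each side the two terms of the form $\langle\nabla_{[\,\cdot\,]}H_i,\cdot\rangle$ and $\langle\nabla_{H_i}[\,\cdot\,],\cdot\rangle$ produce a ``double bracket'' contribution of the shape $\langle[[\,\cdot\,,H_i],H_i],\cdot\rangle+\langle[H_i,[\,\cdot\,,H_i]],\cdot\rangle$, which vanishes by antisymmetry. What remains on the left is $\langle[[v,H_i],u],H_i\rangle+\langle[H_i,u],[v,H_i]\rangle$ and on the right $\langle[[u,H_i],v],H_i\rangle+\langle[H_i,v],[u,H_i]\rangle$; the two quadratic terms cancel against one another after using $[v,H_i]=-[H_i,v]$ and symmetry of the inner product, so the identity reduces to $\langle[[v,H_i],u]-[[u,H_i],v],H_i\rangle=0$. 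At this point the Jacobi identity gives $[[v,H_i],u]-[[u,H_i],v]=[H_i,[u,v]]$, and the reformulated hypothesis $\langle[H_i,[u,v]],H_i\rangle=0$ closes the argument, proving the first displayed formula. For the final assertion, with $\{H_i\}$ a right-invariant orthonormal frame satisfying $\nabla_{H_i}H_i=0$, the connection (Bochner) Laplacian of $U$ at $e$ is exactly $\sum_i\nabla_{H_i}\nabla_{H_i}u=\Delta u$ in the stated sense, while $\sum_i R(u,H_i)H_i=\text{Ric}(u)$ is the Ricci contraction; substituting into the formula already proved yields $K(u)=-\frac12(\Delta u+\text{Ric}(u))=-\frac12\square u$.

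The step I expect to be the main obstacle is the sign bookkeeping: because the connection and metric are right invariant, the bracket of the extending vector fields is $-[\cdot,\cdot]$, which flips signs in Koszul, in torsion-freeness and in the curvature formula relative to the left-invariant setting of Theorem \ref{t1}, and these must be kept consistent so that the final Ricci sign matches the convention $\square=\Delta+\text{Ric}$. The only genuinely non-formal input is small and isolated, namely the use of the Jacobi identity together with the hypothesis in the form $\langle[H_i,\cdot],H_i\rangle=0$.
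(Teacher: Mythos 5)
Your proof is correct, and its first half coincides with the paper's own argument: the reduction $\nabla_{H_i}\nabla_{H_i}u+R(u,H_i)H_i=\nabla_{H_i}[u,H_i]+\nabla_{[u,H_i]}H_i$ under $\nabla_{H_i}H_i=0$ is precisely the chain of identities the paper records in (\ref{e12}) (torsion-freeness, the definition of $R$, the hypothesis), merely run in the opposite direction and applied to $u$ rather than to the test vector $v$. The genuine divergence is in the symmetry step. The paper keeps the Weitzenb\"ock expression in the $v$-slot and transfers it to the $u$-slot by quoting two standard Riemannian facts: skew-adjointness $\langle\nabla_a b,c\rangle=-\langle b,\nabla_a c\rangle$ for invariant fields (metric compatibility, applied twice to handle $\nabla_{H_i}\nabla_{H_i}$) together with the pair symmetry $\langle R(v,H_i)H_i,u\rangle=\langle R(u,H_i)H_i,v\rangle$. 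You instead prove the required $(u,v)$-symmetry of the bracket expression directly: Koszul expansion of all four covariant derivatives, cancellation of the double-bracket and quadratic terms, then the Jacobi identity plus the hypothesis in the reformulated form $\langle[H_i,Z],H_i\rangle=0$. What each route buys: the paper's is shorter, and its transfer step needs no hypothesis at all (it is valid for any Levi-Civita connection); yours is purely Lie-algebraic and never invokes the curvature tensor's symmetries --- which is attractive in the infinite-dimensional setting the paper ultimately cares about, where tensorial identities require care --- at the cost of a heavier computation and a second use of $\nabla_{H_i}H_i=0$. The two closing ingredients are in fact cousins: the pair symmetry of $R$ is itself a consequence of the first Bianchi identity, i.e.\ of Jacobi, so your argument essentially re-derives, in the special case needed, the symmetry the paper cites. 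Your sign bookkeeping (vector-field bracket of right-invariant extensions equal to $-[\cdot,\cdot]$, hence $\nabla_a b-\nabla_b a=-[a,b]$ and the $+\nabla_{[u,H_i]}H_i$ term in $R$) is consistent throughout, and your final identification of $\sum_i\nabla_{H_i}\nabla_{H_i}u$ with $\Delta u$ and $\sum_i R(u,H_i)H_i$ with $\text{Ric}(u)$ matches the paper's.
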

\begin{proof}
Note that for each $v \in \scr{G}$,
\begin{equation}\label{e12}
\begin{split}
&\nabla_{ad_{v}H_i}H_i+\nabla_{H_i}(ad_{v}H_i)\\
&=-\nabla_{[v,H_i]}H_i-\nabla_{H_i}[v,H_i]\\
&=-\nabla_{[v,H_i]}H_i-\nabla_{H_i}(\nabla_{v}H_i-\nabla_{H_i}v)\\
&=-\nabla_{[v,H_i]}H_i-\nabla_{v}\nabla_{H_i}H_i-\nabla_{[H_i,v]}H_i-R(H_i,v)H_i+\nabla_{H_i}\nabla_{H_i}v\\
&=R(v,H_i)H_i+\nabla_{H_i}\nabla_{H_i}v
\end{split}
\end{equation}
In the first step above, we used the property $ad_{v}u=-[v,u]$ for
every $u,v \in \scr{G}$ if we view $u,v$ as the right invariant vector fields on $G$. In the second step  we used the  fact that $\nabla$ is torsion free.
In the third step we used the definition of the  Riemanian curvature tensor. Finally we used the assumption
$\nabla_{H_i}H_i=0$.

Then by (\ref{e8}), for each $u,v \in \scr{G}$,
\begin{equation*}
\begin{split}
&\langle K(u),v\rangle=-\frac{1}{2}\Big\langle u, \sum_i \big(\nabla_{ad_{v}H_i}H_i+\nabla_{H_i}(ad_{v}H_i)
\big)\Big\rangle\\
&=-\frac{1}{2}\Big \langle u, \sum_i \big(R(v,H_i)H_i+\nabla_{H_i}\nabla_{H_i}v \big) \Big \rangle\\
&=-\frac{1}{2}\Big \langle \sum_i\big(\nabla_{H_i}\nabla_{H_i}u+ R(u,H_i)H_i\big),v\Big \rangle,
\end{split}
\end{equation*}
where in the last step we used the property
$\langle \nabla_{u}v, w\rangle=-\langle v,\nabla_{u} w\rangle $ for $u,v,w \in \scr{G}$
since $\nabla$ is Riemannian  with respect the metric $\langle\  \rangle$; we also used the symmetric property of
the curvature tensor $R$.

Since $v$ is arbitrary, we get,
\begin{equation*}
K(u)=-\frac{1}{2}\sum_{i}\big(\nabla_{H_i}\nabla_{H_i}u+R(u,H_i)H_i\big),
\end{equation*}
If $\{H_i\}$ is an orthonormal basis of $\scr{G}$, define the right invariant vector fields
$\tilde H_i(x):=T_e R_x H_i,\ U(x):=T_e R_x u,\ \forall x \in G$,
then $\Delta U(x)=$ $\sum_{i}\nabla^2 U(x)(\tilde H_i(x), \tilde H_i(x))$ $=\sum_{i}\big(\nabla_{\tilde H_i}$ $\nabla_{\tilde H_i}U(x)$
$-\nabla_{\nabla_{\tilde H_i}\tilde H_i}U(x)\big)$, hence
$\Delta u=\Delta U(x)|_{x=e}=\sum_i \big(\nabla_{H_i}\nabla_{H_i}u-\nabla_{H_i}H_iu\big)$
$=\sum_i \nabla_{H_i}\nabla_{H_i}u$ since $\nabla_{H_i}H_i=0$.
Also note that $\sum_{i}R(u,H_i)H_i=\text{Ric}(u)$, so we have $K(u)=-\frac{1}{2}\big(\Delta u +\text{Ric}(u)\big)$.

\end{proof}

\section {Some applications}
\subsection{The rigid body SO(3)}
To describe the motion of a rigid body, the configuration space is $G=SO(3)$, see [A-K] and [M-R].
$T_e G=so(3)$, the $3\times 3$ skew symmetric matrices.
Take a basis of $so(3)$, namely
\vskip 3mm

$E_1=\left ( \begin{array}{ccc} 0&0&0\\
0&0&-1\\0&1&0 \end{array} \right)$, $E_2=\left ( \begin{array}{ccc} 0&0&1\\
0&0&0\\-1&0&0 \end{array} \right)$, $E_3=\left ( \begin{array}{ccc} 0&-1&0\\
1&0&0\\0&0&0 \end{array} \right)$

It satisfies the following relations,
\begin{equation}\label{e13}
[E_1,E_2]=E_3,\ \ [E_2,E_3]=E_1,\ \ [E_3,E_1]=E_2.
\end{equation}

For $v\in so(3)$ with the form $v=\left ( \begin{array}{ccc} 0&-v_3&v_2\\
v_3&0&-v_1\\-v_2&v_1&0 \end{array} \right)$, $v_j \in \mathbb{R}^1,\ j=1,2,3$, we have,
$v=v_1E_1+v_2E_2+v_3E_3$. We define $\hat{v} \in \mathbb{R}^3$ to be the unique element
such that $v\eta=\hat{v}\times \eta$
for each $\eta \in \mathbb{R}^3$; in fact, it easy to check that $\hat{v}:=(v_1,v_2,v_3)$.

Take $I=(I_1,I_2,I_3)$ such that $I_j>0,\ j=1,2,3$ and  define an inner product in
$so(3)$ as follows,
\begin{equation*}
\langle v, v \rangle^{I}:=\sum_{j=1}^3I_jv_j^2, ~~~\forall v \in so(3)\ \text{with}\ \hat{v}=(v_1,v_2,v_3),
\end{equation*}
We extend $\langle, \rangle^{I}$ to $SO(3)$ by left translation, then we get a left invariant metric, which we still
write as $\langle, \rangle^{I}$. In particular, if $H_i=0$ for each $i$ in the  semi-martingale (\ref{e3}), then
$g(t)^{-1}\frac{dg(t)}{dt}=u(t)$, and $\widehat{u(t)}$ is the angular velocity vector. In the definition of
the Lagrangian in (\ref{e4}), if we choose the metric to be $\langle, \rangle^{I}$, then the Lagrangian is the kinetic energy with moment of
inertia $I$. See the discussion in [A-K], [M-R].

Let $\nabla^{I}$  be the Levi-Civita connection with respect to $\langle, \rangle^{I}$. By (\ref{e13}) and the formula for the
Levi-Civita connection, we derive,
\begin{equation}\label{e14}
\begin{split}
&\nabla^{I}_{E_1}E_1=0,\ \ \nabla^{I}_{E_1}E_2=\frac{1}{2} \big(1+\frac{I_2-I_1}{I_3}\big)E_3, \ \
\nabla^{I}_{E_2}E_1=\frac{1}{2} \big(-1+\frac{I_2-I_1}{I_3}\big)E_3\\
&\nabla^{I}_{E_2}E_2=0,\ \ \nabla_{E_2}^{I}E_3=\frac{1}{2} \big(1+\frac{I_3-I_2}{I_1}\big)E_1, \ \
\nabla_{E_3}^{I}E_2=\frac{1}{2} \big(-1+\frac{I_3-I_2}{I_1}\big)E_1\\
&\nabla_{E_3}^{I}E_3=0,\ \ \nabla_{E_3}^{I}E_1=\frac{1}{2} \big(1+\frac{I_1-I_3}{I_2}\big)E_2, \ \
\nabla_{E_1}^{I}E_3=\frac{1}{2} \big(-1+\frac{I_1-I_3}{I_2}\big)E_2\\
\end{split}
\end{equation}
Take $H_i:=\frac{1}{\sqrt{I_i}}E_i$ for $i=1,2,3$ in SDE (\ref{e3}), $\{H_i\}_{i=1}^3$ is an orthonormal basis
of $so(3)$. By (\ref{e13}) and (\ref{e14}), for each $v\in so(3)$ with $\hat{v}=(v_1,v_2,v_3)$,
\begin{equation*}
\sum_{i}\big(\nabla^{I}_{ad_{v}H_i}H_i+\nabla^{I}_{H_i}(ad_{v}H_i)\big)=\frac{1}{I_1I_2I_3}
\big((I_2-I_3)^2v_1E_1+(I_3-I_1)^2v_2E_2+(I_1-I_2)^2v_3E_3\big)
\end{equation*}
Then by (\ref{e8}), for every $u \in so(3)$ with $\hat{u}=(u_1,u_2,u_3)$,
\begin{equation*}
K(u)=-\frac{1}{2}\frac{1}{I_1I_2I_3}
\big((I_2-I_3)^2u_1E_1+(I_3-I_1)^2u_2E_2+(I_1-I_2)^2u_3E_3\big)
\end{equation*}
From [M-R], we know for each $u \in so(3)$ with $\hat{u}=(u_1,u_2,u_3)$, the adjoint
of $ad$ with respect to $\langle \ \rangle^{I}$ has the following expression,
\begin{equation*}
ad^*_u (u)=\frac{u_2u_3(I_2-I_3)}{I_1}E_1+\frac{u_3u_1(I_3-I_1)}{I_2}E_2+
\frac{u_1u_2(I_1-I_2)}{I_3}E_3.
\end{equation*}
Replacing in the equation (\ref{e6}), if the semi-martingale $g(.)$ in (\ref{e3}) is a critical point of
$J^{\nabla^{I},\langle\ \rangle^{I}}$, and  writting $\widehat{u(t)}=(u_1(t),u_2(t),u_3(t))$, the vector $\hat u$   satisfies the
following equation,
\begin{equation*}
\begin{cases}
& I_1\dot{u}_1(t)=(I_2-I_3)u_2(t)u_3(t)-\frac{(I_2-I_3)^2}{2I_2I_3}u_1(t)\\
& I_2\dot{u}_2(t)=(I_3-I_1)u_1(t)u_3(t)-\frac{(I_3-I_1)^2}{2I_1I_3}u_2(t)\\
& I_3\dot{u}_3(t)=(I_1-I_2)u_1(t)u_2(t)-\frac{(I_1-I_2)^2}{2I_1I_2}u_3(t)
\end{cases}
\end{equation*}
More generally, using properties (\ref{e13}) and (\ref{e14}), we can compute  equation (\ref{e6}) for the critical point
of functional $J^{\nabla^{I'},\langle\ \rangle^{I}}$ where $I,I' \in \mathbb{R}^3$ may be different. In particular, for
$I'=(1,1,1)$, by (\ref{e14}), $\nabla^{I'}_{E_i}E_j+\nabla^{I'}_{E_j}E_i=0$ for each $i,j$, which implies that
$K(u)=0$ for each $u\in so(3)$ for the metric $\langle\ \rangle^{I}$ and the connection $\nabla^{I'}$. Therefore in this case, the equation (\ref{e6}) is the same as that standard Euler-Poincar\'e
equation.

\subsection{Volume preserving diffeomorphisms on the torus }

We shall discuss the two dimensional torus $\mathbb{T}^2$ for simplicity, although
the torus of any dimension or even a more general compact Riemannian manifold can be considered as well.

Let $G^s_{V}:=\{g:=\mathbb{T}^2\rightarrow \mathbb{T}^2$ is a volume preserving
 bijection map, $g,g^{-1} \in H^s\}$, where $H^s$ is the $s$-th order Sobolev space. If $s>2$, then
 $G^s_{V}$ is an $C^{\infty}$ infinite dimensional Hilbert manifold (see [E-M]). The composition in $G^s_{V}$ will be the composition of  $\mathbb{T}^2$ maps. If $s>2$, $G_{V}^s$ is a topological group (not a Lie group since
 left translation is not smooth), see [E-M], and
 $$g_{V}^s:=T_e G_{V}^s=\{X:H^s(\mathbb{T}^2;
T\mathbb{T}^2),\ \pi(X)=e,\ divX=0\},$$
 where $e$ is the identity map between $\mathbb{T}^2$.

We consider the inner products $\langle\ \rangle^0$  and $\langle\ \rangle^1$ in $g_{V}^s$
defined as follows,
\begin{equation*}
\langle X, Y \rangle^0:=\int_{\mathbb{T}^2}\langle X(x),Y(x)\rangle_{x}dx, \ \ \forall X,Y \in g_{V}^s,
\end{equation*}
\begin{equation*}
\langle X, Y \rangle^1:=\int_{\mathbb{T}^2}\langle X(x),Y(x)\rangle_{x}dx+
\int_{\mathbb{T}^2}\langle \nabla X(x), \nabla Y(x)\rangle_{x}dx, \ \ \forall X,Y \in g_{V}^s,
\end{equation*}
where $\langle, \ \rangle$, $\nabla$ are the standard metric and corresponding Levi-Civita connection on $\mathbb{T}^2$
($\nabla$ coincides with the ordinary derivative on $\mathbb{T}^2$). We extend $\langle,\ \rangle^0$, $\langle,\ \rangle^1$
to  right invariant metrics on $G_{V}^s$ by right translation, which we still write as
$\langle,\ \rangle^0$  and $\langle,\ \rangle^1$.

By Theorem 9.1 and 9.6 in [E-M], there exists a right invariant Levi-Civita
connection $\nabla^0$ with respect to $\langle, \rangle^0$. In particular,
\begin{equation*}
\nabla_{X}^0Y=P_{e}\big(\nabla_X Y\big),\ \ \forall X,Y \in g_{V}^s,
\end{equation*}
where $\nabla$ is the Levi-Civita connection on $\mathbb{T}^2$ and $P_{e}$ is the orthogonal projection
(with respect to $L^2$) onto $g_{V}^s=$ $\{X \in H^s(T\mathbb{T}^2),\ divX=0 \}$  determined
by the Hodge decomposition, $H^s(T\mathbb{T}^2):=g_{V}^s\bigoplus d H^{s+1}(\mathbb{T}^2)$.
From now on, for $X \in g_{V}^s$  when we use $\nabla$  we view $X\in T\mathbb{T}^2$ as a vector field on
$\mathbb{T}^2$ and  when we use $\nabla^0$ we view $X$ as an element in  $g_{V}^s$.

We choose some suitable basis of $g_V^s$ as in [C-C]. We consider such basis indexed
by $k$ in a subset of $\mathbb{Z}^2$ having an unique representative of the equivalence class defined by the relation $k\simeq k'$ if $k+k' =0$. The  vectors $\{A_k,B_k\}_{k=1}^{\infty}$ will have the following form,
\begin{equation*}
A_k(\theta)=\lambda(|k|)(A_k^1(\theta), A_k^2(\theta)), \ \ \text{with}\
A_k^1(\theta)=k_2\text{cos}(k\cdot \theta),\  A_k^2(\theta)=-k_1\text{cos}(k\cdot \theta),\
\end{equation*}
\begin{equation*}
B_k(\theta)=\lambda(|k|)(B_k^1(\theta), B_k^2(\theta)), \ \ \text{with}\
B_k^1(\theta)=k_2\text{sin}(k\cdot \theta),\  B_k^2(\theta)=-k_1\text{sin}(k\cdot \theta),\
\end{equation*}
where $\theta=(\theta_1,\theta_2)\in \mathbb{T}^2$, $k=(k_1,k_2)\in \mathbb{Z}^2$,
$k \cdot \theta=k_1\theta_1+k_2\theta_2$ and  $\lambda(|k|)$  is a constant
depending only on $|k|=|k_1|+|k_2|$.
Since
$\nabla_{A_k}A_k=0,\ \ \nabla_{B_k}B_k=0\ \ \forall k$ (see the proof of Lemma 2.1 in [C-C]),  the SDE (\ref{e10}) becomes,
\begin{equation}\label{e15}
dg(t,\theta)=\sum_{k}\big(A_k(g(t,\theta))\circ dW_k^1(t)+
B_k(g(t,\theta))\circ dW_k^2(t)\big)+u(t,g(t,\theta))dt,\ \ g(0,\theta)=\theta,
\end{equation}
where $u(t,.) \in T \mathbb{T}^2, \ \forall t$, such that $u \in C^1([0,1]; g_{V}^s)$. This SDE was considered in [C-C].
If $u(t,.) \in T \mathbb{T}^2$ is regular enough and
$\lambda(|k|)$ decays to $0$ fast enough as $|k|$ tends to infinity, then the weak solution
of (\ref{e15}) exists, see [C-C]. Moreover the Stratonovich and the It\^o integrals in the equation coincide.

Note that in the proof Theorem \ref{t1}, when $\{A_k,B_k\}$ is an infinite sequence,
if $\lambda(|k|)$ decays to $0$ fast enough as $|k|$ tends to infinity, we can change the operation of
derivative with $\varepsilon$ between the operation of the infinite sum of
index $k$ and the conclusion of the Theorem is true. But for simplicity, from now on we assume
that $u(.,.)$ is smooth, and
$\{A_k,B_k\}$  is a finite sequence, i.e., there exists an integer $m>0$, such that $\lambda(|k|)=0$
for each $k$ with $|k|>m$. Furthermore, by the proof of Theorem 2.2 in [C-C], we have the following characterization,
\begin{equation}\label{e17}
\sum_{|k|\leqslant m}\big(A_kA_k f+B_kB_kf\big)=\nu \Delta f,\ \ \forall f \in C^2(\mathbb{T}^2),
\end{equation}
where $\nu:=\frac{1}{2}\sum_{k \leqslant m}\lambda^2(|k|)k_1^2$.

So the infinite dimensional Laplacian, when computed on smooth cylinder functions with only one variable, coincides with the usual Laplacian
on the torus.

\begin{prp}\label{p2}
The semi-martingale $g(,)$ in (\ref{e15}) is a critical point of the action functional
$J^{\nabla^0,\langle\ \rangle^0}$, if and only if  $u$ satisfies the following Navier-Stokes equation,
\begin{equation}\label{e18}
\begin{cases}
&\frac{\partial u}{\partial t}=-u\cdot \nabla u+\frac{\nu}{2}\Delta u+ \nabla p(t)\\
&div u=0.
\end{cases}
\end{equation}

The semi-martingale $g(,)$ in (\ref{e15}) is a critical point of the action functional  $J^{\nabla^0,\langle\ \rangle^1}$ if and only if $u$ satisfies the viscous Camassa-Holm equation,
\begin{equation}\label{e19}
\begin{cases}
&\frac{\partial v}{\partial t}=-u\cdot \nabla v-\sum_{j=1}^2 v_j \nabla u_j
+\frac{\nu}{2}\Delta v + \nabla p(t)\\
& v=u-\Delta u\\
&div u =0
\end{cases}
\end{equation}
\end{prp}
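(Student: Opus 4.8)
The plan is to treat both assertions as specializations of the right-invariant critical point equation (\ref{e11}) of Theorem \ref{t2} to $G=G^s_V$ with $\{H_i\}=\{A_k,B_k\}_{|k|\le m}$, and then to read off the two terms on its right-hand side. The first observation, common to both cases, is that $\tilde u=u$: since $\nabla^0_X Y=P_e(\nabla_X Y)$ and $\nabla_{A_k}A_k=\nabla_{B_k}B_k=0$ on $\mathbb T^2$, we get $\nabla^0_{H_i}H_i=0$ for every $i$, so the contraction term in (\ref{e7a}) vanishes. Thus for both metrics the equation reads $\dot u=-ad^*_u u-K(u)$, and the whole proof reduces to identifying $-ad^*_u u$ (which will produce the nonlinear transport/stretching part plus a pressure gradient) and $-K(u)$ (which will produce the viscous term).

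For the $L^2$ functional $J^{\nabla^0,\langle\ \rangle^0}$ I would first compute $ad^*_u u$. Using $ad_u w=-[u,w]$ and pairing against an arbitrary divergence-free $w$, integration by parts that exploits $\div u=\div w=0$ (so the $\partial_k(u_ju_k)$ and $\partial_k|u|^2$ terms integrate away) gives $\langle ad^*_u u,w\rangle^0=\langle u\cdot\nabla u,w\rangle^0$, i.e. $ad^*_u u=P_e(u\cdot\nabla u)$, hence $-ad^*_u u=-u\cdot\nabla u+\nabla p$. Since $\nabla^0$ is the Levi-Civita connection of $\langle\ \rangle^0$ and $\{H_i\}$ is orthonormal, the Proposition applies and yields $K(u)=-\tfrac12\square u=-\tfrac12(\Delta u+\Ric(u))$. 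The decisive reduction is that this group de Rham-Hodge operator collapses to $\nu$ times the scalar torus Laplacian: because $\mathbb T^2$ is flat the Weitzenböck curvature term cancels, and the identity (\ref{e17}) converts $\sum_i\nabla^0_{H_i}\nabla^0_{H_i}$ into $\nu\Delta$, so that $\square u=\nu\Delta u$ modulo gradients. Feeding this into $\dot u=-ad^*_u u-K(u)$ gives exactly (\ref{e18}).

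For the $H^1$ functional $J^{\nabla^0,\langle\ \rangle^1}$ the connection $\nabla^0$ is no longer the Levi-Civita connection of $\langle\ \rangle^1=\langle (I-\Delta)\cdot,\cdot\rangle^0$, so the Proposition cannot be invoked; instead I would work from the definition (\ref{e8}) directly, using the purely torsion-free identity (\ref{e12}) (still valid for $\nabla^0$ because $\nabla^0_{H_i}H_i=0$) to rewrite the bracketed sum as $\sum_i(R^0(w,H_i)H_i+\nabla^0_{H_i}\nabla^0_{H_i}w)$. Setting $v:=(I-\Delta)u$ and using $\langle u,\cdot\rangle^1=\langle v,\cdot\rangle^0$, the same integration by parts gives $\langle ad^*_u u,w\rangle^1=\langle v,-[u,w]\rangle^0=\langle u\cdot\nabla v+\sum_j v_j\nabla u_j,\,w\rangle^0$ for all divergence-free $w$; meanwhile the $\langle\ \rangle^0$-metric property of $\nabla^0$ and the symmetry of the curvature operator move the whole de Rham-Hodge operator onto $v$, so that after the flat-torus reduction $\langle K(u),w\rangle^1=-\tfrac{\nu}{2}\langle\Delta v,w\rangle^0$. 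Testing $\dot u=-ad^*_u u-K(u)$ against all divergence-free $w$ and noting $\langle\dot u,w\rangle^1=\langle\dot v,w\rangle^0$ then yields, modulo a gradient $\nabla p$, precisely the viscous Camassa-Holm system (\ref{e19}).

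The main obstacle in both parts is the identification of the viscous term, namely that $K(u)$ equals $-\tfrac{\nu}{2}\Delta u$ (respectively produces $\tfrac{\nu}{2}\Delta v$). This is where the infinite-dimensional geometry genuinely enters: one must pass from the group-level operator $\sum_i\nabla^0_{H_i}\nabla^0_{H_i}$, with its nested Hodge projections $P_e\nabla_{H_i}P_e\nabla_{H_i}$, plus the group Ricci curvature, down to the honest torus Laplacian. The two ingredients that make this work are the flatness of $\mathbb T^2$, which removes the Weitzenböck curvature contribution so that the Hodge and rough Laplacians coincide, and the spectral identity (\ref{e17}), which supplies the constant $\nu$; the projections $P_e$ may be dropped inside the $L^2$ pairing since their complement consists of gradients that annihilate divergence-free test fields after integration by parts. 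Everything else — the $ad^*$ computations and the passage between $u$ and $v$ — is routine integration by parts once $\div u=\div w=0$ is invoked.
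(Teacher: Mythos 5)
Your proposal follows the paper's proof in all essentials: both reduce the statement to Theorem \ref{t2}, note that $\tilde u=u$ because $\nabla^0_{A_k}A_k=\nabla^0_{B_k}B_k=0$, and then identify $ad^*_uu$ and $K(u)$ for each metric; your integration-by-parts derivations of $ad^*_uu=P_e(u\cdot\nabla u)$ and of the $H^1$ formula reproduce what the paper imports from [A-K] and [S], and the mechanism you single out as decisive for $K$ --- work inside the $\langle\ \rangle^0$ pairing so that the projections $P_e$ drop against divergence-free test fields, apply the torsion-free identity (\ref{e12}) on the flat torus, then invoke (\ref{e17}) and the self-adjointness of $\Delta$ --- is exactly the paper's computation, for both metrics.

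One step as literally written would not survive scrutiny. In the $L^2$ case you invoke Proposition 3.4 to get $K(u)=-\tfrac12(\Delta u+\Ric(u))$ and then assert that the Weitzenb\"ock curvature term cancels ``because $\mathbb{T}^2$ is flat.'' The curvature in Proposition 3.4 is that of the right-invariant connection on the group $G^s_V$ itself (the Arnold--Ricci curvature), which does not vanish just because the underlying torus is flat; moreover the basis $\{A_k,B_k\}$ carries the weights $\lambda(|k|)$ and is not orthonormal, so the $\square$ form of that proposition is not available. The same conflation reappears in your $H^1$ discussion when you symmetrize $\sum_i(R^0(\cdot,H_i)H_i+\nabla^0_{H_i}\nabla^0_{H_i}\,\cdot\,)$ at the group level. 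The paper deliberately avoids this route (it remarks that going through Proposition 3.4 requires a separate computation of the group Ricci tensor, carried out in [C]) and instead stays inside the $L^2$ pairing, where only the torus connection and the (genuinely zero) torus curvature ever appear. Since that correct mechanism is also stated in your final paragraph and by itself carries the proof, the defect is one of justification rather than of outcome: the sentences appealing to flatness of $\mathbb{T}^2$ to kill the group curvature should be removed, and the $K$ computation should be run entirely through the pairing argument you describe.
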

\begin{proof}
To apply Theorem \ref{t2}, we just need to give an explicit expression
of $ad^*_u (u)$ and $K(u)$ in (\ref{e7}), (\ref{e8}) for the  different metrics and connections.

For each $X \in H^s(T\mathbb{T}^2)$ and $Y \in  g_{V}^s$,
\begin{equation*}
\langle P_e X, Y\rangle^0=\int_{\mathbb{T}^2}\langle
(P_e X)(x), Y(x)\rangle dx=\int_{\mathbb{T}^2}\langle X(x),Y(x) \rangle dx.
\end{equation*}
Therefore,  for each $u, v \in g_{V}^s$ regular enough,
\begin{equation*}
\begin{split}
&\langle u, \nabla^0_{ad_{v}A_k}A_k+\nabla^0_{A_k}(ad_{v}A_k)\rangle^0=
\int_{\mathbb{T}^2}\langle u, P_e\big(\nabla_{ad_{v}A_k}A_k+\nabla_{A_k}(ad_{v}A_k)\big)\rangle dx\\
&=-\int_{\mathbb{T}^2} \langle u, (\nabla_{[v,A_k]}A_k+\nabla_{A_k}[v,A_k])\rangle dx
\end{split}
\end{equation*}
Note that $\nabla$ is the Levi-Civita connection on $\mathbb{T}^2$,
$\nabla_{A_k}A_k=0$, and the Riemanian curvature on $\mathbb{T}^2$ is zero, by the same computation in (\ref{e12})  we have,
\begin{equation*}
\nabla_{[v,A_k]}A_k+\nabla_{A_k}[v,A_k]=-\nabla_{A_k}\nabla_{A_k}v
\end{equation*}
An analogous identity holds for $B_k$, so combining the   computations above,
\begin{equation*}
\begin{split}
&\sum_{k}\langle u, \nabla^0_{ad_{v}A_k}A_k+\nabla^0_{A_k}(ad_{v}A_k)
+\nabla^0_{ad_{v}B_k}B_k+\nabla^0_{B_k}(ad_{v}B_k)\rangle^0\\
&=\sum_{k}\int_{\mathbb{T}^2}\langle u, \nabla_{A_k}\nabla_{A_k}v+\nabla_{B_k}\nabla_{B_k}v\rangle dx\\
&=\int_{\mathbb{T}^2}\langle u, \nu \Delta v\rangle dx=\int_{\mathbb{T}^2}\langle \nu \Delta u, v\rangle dx=
\langle \nu \Delta u,v\rangle^0,
\end{split}
\end{equation*}
where in the second step above we used  property (\ref{e17}), in the third step  the integration by parts formula on
$\mathbb{T}^2$, and the last step is due to the fact that  $\Delta u \in g_{V}^s$ for $u \in g_{V}^s$ regular enough. So by definition
(\ref{e8}), we have $K(u)=-\frac{\nu}{2}\Delta u$ for the metric $\langle\ \rangle^0$ and connection $\nabla^0$.

Another proof of this equality was given in [C], using the characterization of $K$ in Proposition 3.4 and
a direct computation of the operator $K$ via the computation of the Ricci tensor for the Levi-Civita connection with respect to the metric $\langle\ \rangle^0$.

From [A-K], for the metric $\langle\ \rangle^0$, we have
$ad^*_u (u)=P_e(\nabla_{u}u)=P_e(u\cdot \nabla u)$.

As a result  the reduced Euler-Poincar\'e equation
(\ref{e11}) for $J^{\nabla^0, \langle, \rangle^0}$ is the Navier-Stokes equation (\ref{e18}).

Now we consider the  metric  $\langle\ \rangle^1$.
 For each $X \in H^s(T\mathbb{T}^2)$ and $Y \in  g_{V}^s$,
\begin{equation*}
\begin{split}
&\langle P_e X, Y\rangle^1=\int_{\mathbb{T}^2}\langle
(P_e X)(x), Y(x)\rangle dx+ \int_{\mathbb{T}^2}\langle
\nabla (P_e X)(x), \nabla Y(x)\rangle dx\\
&=\int_{\mathbb{T}^2}\langle X(x),Y(x) \rangle dx+
\int_{\mathbb{T}^2}\langle
\nabla X(x), \nabla Y(x)\rangle dx,
\end{split}
\end{equation*}
Notice also that $\langle u, \Delta v \rangle^1=\langle \Delta u, v \rangle^1$ for
$u,v\in g_{V}^s$, due to the integration by parts formula on $\mathbb{T}^2$. So we can follow the same steps as we did for the
metric $\langle\ \rangle^0$ above, and obtain $K(u)=-\frac{\nu}{2}\Delta u$ for the metric $\langle\ \rangle^1$ and connection $\nabla^0$.
(The connection is still $\nabla^0$ here).

From Theorem 3.2 in [S] (note that the definition of Laplacian in [S] is the minus Laplacian here), and since $P_e(1-\Delta)^{-1}=(1-\Delta)^{-1}P_e$ on $T\mathbb{T}^2$, for the metric $\langle\ \rangle^1$, we have,
$$ad^*_u (u)=(1-\Delta)^{-1}\Big(P_e\Big(u\cdot \nabla (u-\Delta u)+\sum_{j=1}^2(u_j-\Delta u_j)\nabla u_j \Big)\Big).$$
Combing the above together, the reduced Euler-Poincar\'e equation (\ref{e11}) for
$J^{\nabla^0, \langle\  \rangle^1}$ is the viscous Cassama-Holm equation (\ref{e19}).
\end{proof}

For the standard Camassa-Holm equation we refer to [C-H] and [H-M-R], for viscous Camassa-Holm equation we refer to
[F-H-T] and [V].
\vskip 5mm

\bf Remark 1. \rm  For simplicity  we assume here that $u$ is regular enough, therefore $u$ is the classical solution of the corresponding
PDE. But to check the proof of Theorem \ref{t1}, we only need the test vector $v$ to be regular enough to go through the computation,
and under such cases, the less regular $u$ is the weak solution.

\bf Remark 2. \rm  We can  define a $H^n$ metric as $\langle X, Y\rangle^n:=$ $\int \sum_{i=0}^n \langle \nabla^i X(x), \nabla^i Y(x)\rangle dx$
for each $X,Y \in g_{V}^s$,
the corresponding critical equation (\ref{e11}) for $J^{\nabla^0,\langle\ \rangle^n}$ is as follows,
\begin{equation*}
\begin{cases}
&\frac{\partial u}{\partial t}=-ad^*_u  (u)+\frac{\nu}{2}\Delta u,\\
& div u=0,
\end{cases}
\end{equation*}
where the duality in $ad^*$ here is defined by (\ref{e7}) for metric $\langle\ \rangle^n$.

\bf Remark 3. \rm For the volume-preserving diffeomorphism group on higher dimensional torus, we can also choose an suitable basis
of the corresponding Lie algebra, see [C-M].

\vskip 10mm

REFERENCES

\vskip 5mm

[A-C] M. Arnaudon and A.B. Cruzeiro, \it Lagrangian Navier-Stokes diffusions on manifolds: variational principle and stability, \rm arXiv:1004.2176.

  [A]  V. I. Arnold, \it Sur la g\'eom\'etrie diff\'erentielle des groupes de Lie de dimension infinie et ses applications \`a l'hydrodynamique des fluides parfaits,\rm    Ann. Inst. Fourier  16  (1966),  316--361.

  [A-K] V. I. Arnold and B. Khesin, \it Topological methods in hydrodynamics, \rm Applied Math. Series 125, Springer (1998).

    [B] J. M. Bismut, \it M\'ecanique al\'eatoire, \rm Lecture Notes in Mathematics,  866, Springer
    (1981).

   [C-H] R. Camassa, D.D. Holm, \it A completely integrable dispersive shallow water equation with peaked solutions, \rm Phys. Rev. Lett. 71
  (1993), 1661--1664.

  [C-Z] K. L. Chung and J. C. Zambrini, \it  Introduction to random time and quantum randomness,
   \rm World Scientific (2003).

 [C-C] F. Cipriano and A.B. Cruzeiro, \it Navier-Stokes equation and diffusions on the group of homeomorphisms of the torus, \rm Comm. Math. Phys.  275  (2007),  no. 1, 255--269.

 [C] A.B. Cruzeiro, \it  Hydrodynamics, probability and the geometry of the diffeomorphisms group,
    \rm Seminar on Stoch. Analysis, Random Fields and Applications IV, R. C. Dalang. M. Dozzy, F. Russo ed, Birkhauser P.P. 63 (2011).

 [C-M] A.B. Cruzeiro and P. Malliavin, \it Nonergodicity of Euler fluid dynamics on tori versus positivity of the Arnold-Ricci tensor,
 \rm J. Funct. Anal.  257  no. 1, (2008), 1903--1925.

    [E] M. Emery, \it Stochastic calculus in manifolds, \rm Springer, Universitext (1989).

 [E-M]  D.G. Ebin and J.E. Marsden, \it Groups of diffeomorphisms and the motion of an incompressible fluid, \rm Ann of Math.   92  (1970),  102--163.

 [F-H-T] C. Foias, D. D. Holm and E.S. Titi, \it The three-dimensional viscous Camassa-Holm equations and their relation to the Navier-Stokes equations and turbulence theory, \rm J. Dyn. Diff. Eqns. 14, (2002), 1--35.

 [H-M-R] D.D. Holm, J.E. Marsden and T. Ratiu, \it  The Euler-Poincar\'e equations and semidirect products with applications to continuum theories,
 \rm Adv. Math. 137 no 1 (1998), 1--81.

   [I-W] N. Ikeda and S. Watanabe, \it Stochastic differential equations and diffusion processes, \rm
   North-Holland (1981).

  [LC-O] J.A. L\'azaro-Cam\'i and J.P. Ortega,\it   Stochastic Hamiltonian dynamical systems,\rm Reports
  on Math. Phys, 61 no. 1 (2008), 65--122.

  [M-R] J. E.  Marsden and T. S.  Ratiu, \it Introduction to Mechanics and Symmetry: a basic exposition
  of classical mechanical systems,\rm   Springer, Texts in Applied Math. (2003).

  [S] S. Shkoller, \it Geometry and curvature of diffeomorphism groups with H1 metric and mean hydrodynamics, \rm J. Funct. Anal. 160
  (1998), 337--365.

  [V] J. Vukadinovic, \it On the backwards behavior of the solutions of the 2D periodic viscous Camassa-Holm equations, \rm J. Dyn. Diff.
  Eqns. 14 no. 1 (2002), 37--62.

[Y1] K.  Yasue, \it Stochastic calculus of variations,\rm  Lett. Math. Phys. 4 no. 4 (1980), 357--360.

[Y2] K.  Yasue, \it A variational principle for the Navier-Stokes equation, \rm  J. Funct. Anal. 51 no. 2
(1983), 133--141.

[Z] J. C.  Zambrini, \it Variational processes and stochastic versions of mechanics,\rm    J. Math. Phys. 27 no. 9 (1986), 2307--2330.

\vskip 10mm
ACKNOWLEDGEMENTS

The authors acknowledge the financial support of the project "Probabilistic approach to finite and infinite dimensional dynamical systems" PTDC/MAT/104173/2008 from the portuguese FCT.

\vskip 10mm

\bf Marc Arnaudon
\rm

Laboratoire de Math\'ematiques et
  Applications, CNRS: UMR 7348
  Universit\'e de Poitiers, T\'el\'eport 2 - BP 30179,
  F--86962 Futuroscope Chasseneuil Cedex, France

marc.arnaudon@math.univ-poitiers.fr

\bf Xin Chen
\rm

Grupo de F\'isica-Matem\'atica Univ. Lisboa,
  Av.Prof. Gama Pinto 2
  1649-003 Lisboa, Portugal

chenxin\_217@hotmail.com

\bf Ana Bela Cruzeiro
\rm

GFMUL and Dep. de Matem\'atica IST(TUL),
  Av. Rovisco Pais
  1049-001 Lisboa, Portugal

abcruz@math.ist.utl.pt
\end{document}